\documentclass[leqno]{article}
\usepackage[papersize={176.76mm,250mm},margin=0.75in ]{geometry}

\usepackage{etex}
\usepackage{amsmath, bm,amssymb,amsfonts,dsfont,amsthm}
\usepackage{graphicx}
\usepackage[usenames,dvipsnames,svgnames,table]{xcolor}
\usepackage{multicol}
\usepackage{mathrsfs}
\usepackage{mathtools}
\usepackage{amsthm}
\usepackage{enumitem}
\usepackage{lineno}
\definecolor{antiquefuchsia}{rgb}{0.57, 0.36, 0.51}
\definecolor{auburn}{rgb}{0.43, 0.21, 0.1}
\definecolor{darkcerulean}{rgb}{0.03, 0.27, 0.49}
\definecolor{denim}{rgb}{0.08, 0.38, 0.74}
\definecolor{black}{rgb}{0.0, 0.0, 0.0}
\definecolor{sacramentostategreen}{rgb}{0.0, 0.34, 0.25}
\definecolor{phthaloblue}{rgb}{0.0, 0.06, 0.54}
\usepackage[colorlinks=true,linkcolor=black, urlcolor=auburn, filecolor=blue, citecolor=sacramentostategreen,backref=page]{hyperref}

\usepackage{etoolbox}
\makeatletter
\patchcmd{\BR@backref}{\newblock}{\newblock(Cited on page~}{}{}
\patchcmd{\BR@backref}{\par}{)\par}{}{}
\makeatother

\usepackage{relsize}

\usepackage[all]{xy}
\usepackage{tikz-cd}
\usepackage{array}
\usepackage{tensor}
\usepackage[cal=mathpi,calscaled=.94, bb=ams,frakscaled=.97,scr=rsfso]{mathalfa} 
\usepackage[utf8]{inputenc}

\usepackage{pdflscape}
\usepackage{float}

\usepackage{scalerel,stackengine}

\usetikzlibrary{matrix}
\usetikzlibrary{arrows}

\newcommand{\ds}[1]{\ensuremath{\mathds{#1}}}

\newcommand{\curly}[1]{\ensuremath{\mathscr{#1}}}

\newcommand{\dv}{\ensuremath{\mathrm{Div}}}

\newcommand{\ord}{\ensuremath{\mathrm{ord}}}

\DeclareMathOperator{\kr}{Ker}

\newcommand{\tio}[1]{\ensuremath{{#1}^\times}}

\newcommand{\lr}[1]{\left\langle{#1}\right\rangle}

\newcommand{\id}[1]{\ensuremath{{\mathfrak{#1}}}}

\newcommand{\abs}[1]{\ensuremath{{\left\vert{#1}\right\vert}}}

\newcommand{\bs}{\ensuremath{\backslash}}
\newcommand{\xra}[1]{\xrightarrow{#1}}
\newcommand{\ra}{\rightarrow}
\newcommand{\til}[1]{\widetilde{#1}}

\newcommand{\alg}[1]{{{#1}^{\mathrm{alg}}}}

\usepackage{ccfonts,eucal}
\usepackage[euler-digits,euler-hat-accent]{eulervm}
\usepackage{sseq}

\setcounter{secnumdepth}{5}

\newtheorem{theorem}{Theorem}[section]

\newtheorem{proposition}[theorem]{Proposition}
\newtheorem{corollary}[theorem]{Corollary}
\newtheorem{mdef}[theorem]{Definition}
\let\olddefinition\mdef
\renewcommand{\mdef}{\olddefinition\normalfont}

\let\oldexample\exam
\renewcommand{\exam}{\oldexample\normalfont}

\newtheorem{rem}{Remark}
\let\oldremark\rem
\renewcommand{\rem}{\oldremark\normalfont}

\stackMath
\newcommand\rwhat[1]{%
\savestack{\tmpbox}{\stretchto{%
  \scaleto{%
    \scalerel*[\widthof{\ensuremath{#1}}]{\kern-.6pt\bigwedge\kern-.6pt}%
    {\rule[-\textheight/2]{1ex}{\textheight}}
  }{\textheight}%
}{0.5ex}}%
\stackon[1pt]{#1}{\tmpbox}%
}
\parskip 1ex

\usepackage{changepage}
\usepackage{fancyhdr} 
\fancyhf{}
\cfoot{\thepage}

\pagestyle{fancy}

\newcommand{\myreferences}{C:/Users/Harpreet/Documents/Biblio/bibThesis}



\begin{document}

\title{Perfectoid Tate Curves}
\author{Harpreet Singh Bedi~~~{bedi@gwu.edu}}

\maketitle

\begin{abstract}
Perfectoid versions of Abel Jacobi and Reimann Roch Theorem are proved, and perfectoid Elliptic Curve is constructed. A Perfectoid Tate Curve is defined and its cohomology computed via a \v{C}ech complex. Furthermore, perfectoid Theta function and Weierstra{\ss} series are also defined and suitably interpreted.
\end{abstract}

\tableofcontents

\section{Introduction} The story begins by considering power series of the form given in \eqref{eq:perf1} with coefficients in the perfectoid field $K$ denoted by $K\lr{X}_\infty$. Perfectoid fields are defined in \cite{scholze_1} and most foundational details can found in \cite{Kiran_AWS}.

\begin{equation}\label{eq:perf1}
\sum_{n\geq 0}a_nX^n,~n\in\ds{Z}[1/p]\text{ and } \abs{a_n}\ra 0 \text{ as } n\ra\infty
\end{equation}

The above comes equipped with a Gauss norm, the valuation ring $R=\{a\in K:\abs{a}\leq 1\}$, a  maximal ideal $\id{m}=\{a\in K:\abs{a}< 1\}$ and the residue field $k=R/\id{m}$. Let $R$ denote the restricted ring with $\abs{f}\leq 1$ (Gauss Norm). The elements of the series can be ordered by observing the countability of rationals.

The reduction map takes power series and converts them into polynomials
\begin{equation}
\begin{aligned}
\pi:R&\ra k\\
\pi: R\lr{X}_\infty&\ra k[X,X^{1/p},\ldots,X^{1/p^i},\ldots ]\\
f&\mapsto \til{f}
\end{aligned}
\end{equation}
In particular $g\in K\lr{X}_\infty$ is a unit iff its reduction is a unit $\til{g}\in\tio{k}$.
All the standard properties of Tate Algebras as described in \cite[pp 15]{bosch2014lectures} hold here, and have been proved in \cite{Bedi2018}. The above helps us define order of a power series (analogue of degree of polynomials).

\begin{mdef}
A power series $g\in K\lr{X}_\infty$ with $\abs{g}=1$ is distinguished of order $s$ iff its reduction is of the form
\begin{equation}
\til{g}=a_0+\ldots+a_iX^{j/p^i}+\ldots+a_sX^s, a_i\in\tio{K}, s\in\ds{Z}[1/p]
\end{equation}
\end{mdef}

\begin{theorem}[Weirstra{\ss} preparation theorem]
Let $g\in K\lr{X}_\infty$ of order $s$, then there is a unique monic polynomial $h$ of degree $s$ such that $g=uh$ where $u$ is unit in $K\lr{X}_\infty$.
\end{theorem}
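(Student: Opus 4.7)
The plan is to deduce the preparation theorem from a perfectoid Weierstra{\ss} \emph{division} theorem, following Bosch's classical reduction \cite{bosch2014lectures}. The division theorem asserts: for $g \in K\lr{X}_\infty$ distinguished of order $s$ and any $f \in K\lr{X}_\infty$, there exist unique $q \in K\lr{X}_\infty$ and $r \in K\lr{X}_\infty$ with all exponents strictly below $s$, satisfying $f = qg + r$. The proof adapts the classical argument: uniqueness uses that the map $(q,r) \mapsto qg + r$ is an isometry with respect to the max of Gauss norms, which follows from the distinguished property of $g$ via a reduction-mod-$\id{m}$ leading-term analysis (if $qg = r$, then $\til{q}\til{g} = \til{r}$, and the unit coefficient $a_s \in \tio{k}$ at the top exponent $s$ of $\til{g}$ forces $\til{q}=0$; iterating on coefficient valuations gives $q=0$, hence $r=0$); existence then follows from closedness of the image together with density of $\ds{Z}[1/p]$-polynomials, for which the direct Euclidean-style division argument applies after decomposing $g = X^s v + g^-$ with $v$ a unit of reduction $a_s$.

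Assuming this division theorem, apply it to $f = X^s$ to obtain $X^s = qg + r$, and set $h := X^s - r = qg$. Then $h$ has top exponent $s$ with coefficient $1$ and all other exponents strictly below $s$, so $h$ is monic of degree $s$. To see $q \in K\lr{X}_\infty^\times$, reduce $qg = h$ modulo $\id{m}$: in $k[X^{1/p^\infty}]$ one has $\til{q}\til{g} = X^s - \til{r}$. Since $\til{g}$ has top exponent $s$ with unit coefficient $a_s$ and $X^s - \til{r}$ has top exponent $s$ with coefficient $1$, comparing top exponents forces $\til{q}$ to be the constant $a_s^{-1} \in \tio{k}$. By the unit criterion recalled in the excerpt, $q$ is a unit in $K\lr{X}_\infty$, so setting $u := q^{-1}$ yields $g = uh$ as required.

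For uniqueness of the factorization, if $g = uh = u'h'$ with $h, h'$ both monic of degree $s$, then $X^s = u^{-1}g + (X^s - h)$ and $X^s = u'^{-1}g + (X^s - h')$ are both perfectoid Weierstra{\ss} division expressions for $X^s$ with remainder of degree $<s$, so the uniqueness in the division theorem forces $u = u'$ and $h = h'$. The main obstacle is the existence half of the division theorem in the perfectoid setting: with exponents dense in $\ds{Z}[1/p]_{\geq 0}$, one cannot use the naive termination of a Euclidean step on each monomial $X^\alpha$---the ``degree drop'' after one division step need not be bounded below by a fixed positive quantity, and the usual iterative-contraction argument requiring $|g^-|<1$ fails when $|g^-|=1$---so the density-plus-closedness argument must be handled carefully by exploiting the isometry of multiplication by $g$ and completing from the subring of $\ds{Z}[1/p]$-polynomials.
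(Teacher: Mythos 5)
The paper never proves this theorem; it is stated after citing the earlier reference for the foundational properties of $K\lr{X}_\infty$, so there is no in-paper proof to compare against. Your reduction of preparation to a Weierstra{\ss} division theorem is the standard Bosch-style route and is the right skeleton, and your isometry/uniqueness analysis is correct. You also correctly flag the first gap: existence of division. With exponents dense in $\ds{Z}[1/p]_{\geq 0}$ the termwise Euclidean step has no uniform degree drop, the contraction $q\mapsto \epsilon^{-1}\tau(f-qg^-)$ fails to contract when $|g^-|=1$, and the ``closedness plus density'' fallback still requires putting $\ds{Z}[1/p]$-polynomials $f$ into the image of $(q,r)\mapsto qg+r$ --- which is again a division problem, since $g$ itself has fractional exponents and infinitely many terms; reducing mod $\id{m}$, dividing in $k[X,X^{1/p},\dots]$, lifting, and iterating gives error norms that drop but need not tend to $0$ over a non-discretely-valued $K$. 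So this step really is an open hole, not a deferred routine verification.

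There is a second gap that you do not flag, and I think it is the more fundamental one. Even granting the division theorem, the remainder $r$ of $X^s$ upon division by $g$ is supported on $[0,s)\cap\ds{Z}[1/p]$, which is an infinite set; so $r$, and hence $h=X^s-r$, is a priori a convergent series in $K\lr{X}_\infty$ with bounded exponents, not a finite $\ds{Z}[1/p]$-polynomial. The theorem statement, and crucially the paragraph of the paper immediately following it (``$h$ in the theorem above can be made into a polynomial with integer degrees by a change of variable $X^{1/p^i}\mapsto X$,'' from which finiteness of the zero set of $g$ is deduced), both require $h$ to be an honest finite sum; a series $h$ whose exponents are dense in $[0,s]$ cannot be cleared of denominators by a single substitution $X^{1/p^i}\mapsto X$. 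Your argument, even if completed, would only produce a ``Weierstra{\ss} series'' with top exponent $s$, which is strictly weaker and would not support the finitely-many-zeros conclusion the paper draws from the theorem. You would need a separate argument that $r$ terminates, and it is not clear that it does --- nor even, absent such an argument, that the statement is true as literally written.
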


Notice that an element $g\in K\lr{X}_\infty$ has only finitely many zeros, since $h$ in the theorem above can be made into a polynomial with integer degrees by a change of variable. For example $X^5+X^{1/p^2}$ has degree $5p^2$. Thus, for any polynomial look for the minimum power which would be of the form $a/p^i$ ($i=0$ for integer) and then change the variable $X^{1/p^i}\mapsto X.$ The result is the key to doing algebraic geometry on perfectoid spaces. Start with series, do the computations and reduce to polynomial forms whenever one wants to talk about zeros or poles.

\section{Perfectoid Abel Jacobi Theorem}

\begin{mdef}
Given a point $\alpha\in K$ we define $\ord_\alpha g$ as the highest power of $\varphi_\alpha(X)$ to divide $h$, where $\varphi_\alpha(X)$ is the irreducible polynomial of $\alpha$ over $K$.
\end{mdef}

Put $\ord_\alpha(0)=+\infty$ and obtain an additive valuation $g\mapsto \ord_\alpha(g)\in\ds{Z}[1/p]$, this can be extended to rational functions of the form $f=g_1/g_2$ with $g_i\in K\lr{X}_\infty$. The rational functions would also be called \emph{meromorphic functions}.

 The numbers $\ord_\alpha(f)$ satisfy the following:
\begin{enumerate}
\item {[Finiteness Condition]} There are only finitely many $\alpha$ with $\ord_\alpha\neq 0$ in every region $0<r\leq |\alpha|\leq r'$.
\item {[Rationality Condition]} If $\alpha$ and $\beta$ are conjugate over $K$, $\ord_\alpha=\ord_\beta$ (share the minimal polynomial).
\end{enumerate}

The collection $\{\ord_\alpha(f)\}$ is called the divisor of $f$. The collection $\{ m_\alpha\},\alpha\in \alg{K}$ with $m_\alpha\in\ds{Z}[1/p]$ satisfying the above conditions is called a divisor with component wise addition and thus forms an additive group. 

Given a divisor of the form $\sum_im_{\alpha_i}[\alpha_i]$, with finitely many $\alpha_i$, the corresponding function $f$ with $\dv(f)=\sum_im_{\alpha_i}[\alpha_i]$ is given as 
\begin{equation}
\prod_{\abs{\alpha_i}\leq 1}\left(1-\frac{\alpha_i}{X}\right)^{e_{\alpha_i}m_{\alpha_i}}\prod_{\abs{\alpha_i}> 1}\left(1-\frac{X}{\alpha_i}\right)^{e_{\alpha_i}m_{\alpha_i}}
\end{equation}
 where $e_\alpha$ degree of separability of $\alpha$ over $K$, we can combine the conjugates and get 
 \begin{equation}
 \left(\frac{\varphi_\alpha(X)}{X^{n_\alpha}} \right)^{m_\alpha} \text{ if }\abs{\alpha}\leq 1\text{ and }\left(\frac{\varphi_\alpha(X)}{a_0} \right)^{m_\alpha} \text{ if }\abs{\alpha}> 1
 \end{equation}

 where $n_\alpha=[K(\alpha):K]$ and $a_0=N_{K(\alpha)/K}(\alpha)$ as in \cite[pp 11-12]{roquette1970analytic}. Closely following \cite[chapter 1]{roquette1970analytic} one could adapt to the case of $q\in\ds{Z}[1/p]$. 

A meromorphic function on $\alg{K}$ has period $q$ if it satisfies the functional equation $f(q^{-1}X)=f(X)$, these functions form a subfield denoted as $F_K(q)$ and called elliptic function field over $K$. If a function is $q$ periodic then $\dv( f)$ is $q$ periodic too. The non-zero meromorphic functions $f$ can be determined by $\dv(f)$ upto a function of the form $cX^d$ with $c\in\tio{K}$ and $d\in\ds{Z}[1/p]$, this helps define theta functions
\begin{equation}
f\left(\frac{X}{q}\right)=\frac{(-X)^df(X)}{a}\text{ where }a\in\tio{K}, d\in\ds{Z}[1/p]
\end{equation}
where $d$ is called degree of $f$ and $a$ is called multiplicator of $f$. If another function $f$ satisfies $\dv g=\dv f$ it becomes a theta function with same degree as $f$
\begin{equation}\label{Rq3}
g\left(\frac{X}{q}\right)=\frac{(-X)^dg(X)}{aq^k},\qquad k\in\ds{Z}[1/p]
\end{equation}
and its multiplicator differs by a power of $q$. Thus, $d$ is uniquely determined by the divisor (say $\id{d}=\dv f=\dv g$). The degree is uniquely determined as
\begin{equation}
\deg_q\id{d}=d.
\end{equation}
The multiplicator $a$ is uniquely determined upto a power of $q$, we denote its residue class in
$\tio{K}/q^k, k\in\ds{Z}[1/p]$ as $\Phi_q(\id{d})$ (\emph{Jacobi image}) , and write
\begin{equation}		
a\equiv \Phi_q(\id{d}){\mod^\times } q^{p^{-\infty}}
\end{equation}
If $f(X/q)=f(X)$, then \eqref{Rq3} gives
\begin{equation}\label{Roq5}
\deg_q(\id{d})=0\text{ and }\Phi_q(\id{d})\equiv 1 
\end{equation}

Conversely, if the above is satisfied then for any rational function with $\dv f=\id{d}$ we have $d=0,a=q^{-k}$ for some $k\in\ds{Z}[1/p]$. Then $g(X)$ defined below is $q$ periodic and is uniquely determined upto a factor in $\tio{K}$.
\begin{equation}
g(X)=cX^kf(X), \qquad c\in\tio{K}
\end{equation}
Thus, the conditions in \eqref{Roq5} are necessary and sufficient for $\id{d}$ to be a divisor of a $q$ periodic function. 
The formulas in \eqref{Roq12} explicitly give Jacobi image (of a $q$ periodic divisor $\id{d}$) and the degree. We have the Perfectoid-Abel-Jacobi Theorem {\cite[pp 15]{roquette1970analytic}}:
\begin{proposition}\label{prop1}[Perfectoid-Abel-Jacobi Theorem]
A $q$ periodic function $f$ has a $q$ periodic divisor $\id{d}$ iff it satisfies
\begin{equation}
\deg_q(\id{d})=0,\qquad \Phi_q(\id{d})\equiv 1\mod^{\times}q^{p^{-\infty}}
\end{equation}
with degree and Jacobi image $\Phi_q$ are given in \eqref{Roq12}. Furthermore, $f$ is uniquely determined by $\id{d}$ upto a factor in $\tio{K}$.
\end{proposition}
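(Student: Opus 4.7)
The plan is to organise the argument into three parts --- necessity, sufficiency, and uniqueness --- exploiting the theta-function formalism developed immediately before the statement.

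\textbf{Necessity.} If $f$ is a $q$-periodic meromorphic function with $\dv(f) = \id{d}$, then the general transformation law \eqref{Rq3} applies to $f$ itself (with $k = 0$), giving $f(X/q) = (-X)^d f(X)/a$. Combining this with $f(X/q) = f(X)$ forces the identity $(-X)^d = a$, whence $d = 0$ and $a$ is trivial modulo $q^{p^{-\infty}}$. By the definitions of the degree and the Jacobi image this is exactly $\deg_q(\id{d}) = 0$ and $\Phi_q(\id{d}) \equiv 1 \mod^{\times} q^{p^{-\infty}}$, reproducing \eqref{Roq5}.

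\textbf{Sufficiency.} Conversely, assume $\id{d}$ satisfies the two conditions. First I would produce any meromorphic $f$ with $\dv(f) = \id{d}$ via the explicit product formula displayed just before the proposition; convergence of the product under the Gauss norm is controlled by the finiteness condition on $\id{d}$ together with the perfectoid Weierstra{\ss} preparation theorem. Such $f$ is automatically a theta function whose degree $d$ and multiplicator $a$ are read off from the formulas in \eqref{Roq12}, and the two hypotheses pin down $d = 0$ and $a = q^{-k}$ for some $k \in \ds{Z}[1/p]$. A direct substitution then shows that $g(X) := X^k f(X)$ is $q$-periodic, since $g(X/q) = q^{-k} X^k f(X/q) = q^{-k} X^k \cdot f(X)/a = g(X)$, and still has divisor $\id{d}$ on $\alg{K}^\times$.

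\textbf{Uniqueness and main obstacle.} If $f_1, f_2$ are both $q$-periodic with divisor $\id{d}$, then $h := f_1/f_2$ is a meromorphic unit on $\alg{K}^\times$; by the Weierstra{\ss} preparation theorem, such units are of the form $h = c X^e$ with $c \in \tio{K}$ and $e \in \ds{Z}[1/p]$, and imposing $h(X/q) = h(X)$ forces $q^{-e} = 1$, hence $e = 0$ and $f_1 = c f_2$. The main subtlety throughout is the book-keeping of the fractional exponents $k, e \in \ds{Z}[1/p]$ rather than integers: one must verify that the monomial correction $X^k$ lies in the perfectoid fraction field and transforms consistently under $X \mapsto X/q$ via $(X/q)^k = q^{-k} X^k$. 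This is built into the construction of $K\lr{X}_\infty$ in \eqref{eq:perf1}, so once noted the argument transcribes the classical one in \cite[pp.~15]{roquette1970analytic}.
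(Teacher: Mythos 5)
Your proposal is correct and takes essentially the same route as the paper. The paper does not give a formal \texttt{proof} environment for Proposition~\ref{prop1}; the argument is contained in the paragraphs immediately preceding it — necessity via the theta-function functional equation at \eqref{Roq5}, sufficiency via the monomial correction $g(X)=cX^kf(X)$ with $a=q^{-k}$, and uniqueness via the classification of meromorphic units as $cX^d$ — and your three-part write-up is a cleaner reorganisation of exactly that argument. One small caveat: you attribute the step ``a meromorphic function with trivial divisor on $\alg{K}^\times$ has the form $cX^e$'' to the Weierstra{\ss} preparation theorem, but that theorem controls a single annulus; the global statement about $\ds{G}_m$ is a separate fact the paper asserts (``non-zero meromorphic functions $f$ can be determined by $\dv(f)$ up to a function of the form $cX^d$'') following Roquette, so the citation should point there rather than to Weierstra{\ss} preparation.
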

The fundamental theta function for $\id{d}$ gives an explicit formula for computing the degree and Jacobi image.
\begin{equation}\label{funtheta}
\Theta(X)=\prod_{n\geq 0}\left(1-\frac{q^n}{X}\right)\prod_{n<0}\left(1-{q^{-n}}{X}\right),\qquad n\in\ds{Z}[1/p]
\end{equation}
The above function is a  satisfies the functional equation $\Theta(X/q)=-X\Theta(X)$ which follows from the observation
\begin{equation}
\frac{\Theta(X)}{\Theta(X/q)}=\underset{n\geq 0}{\underbrace{\left(1-\frac{1}{X}\right)}}\cdot
\overset{n< 0}{\overbrace{\left(\frac{1}{1-X}\right)}}=-\frac{1}{X}\end{equation}

Rewriting the functional equation explicitly \eqref{thetaFunc} observe that degree of $\Theta$ is one and its multiplicator is also one.
\begin{equation}\label{thetaFunc}
\Theta(X/q)=-X\Theta(X)
\end{equation}
For every $\alpha\in\tio{\alg{K}}$ define
\begin{equation}
\Theta_\alpha(X)=\Theta(\alpha^{-1}X)
\end{equation}
which is a $q$ periodic function with multiplicity one its functional equation is
\begin{equation}
\Theta_\alpha(q^{-1}X)=\alpha^{-1}(-X)\Theta_\alpha(X)
\end{equation}
which gives degree one and multiplicator $\alpha$. Set
\begin{equation}
\Theta_{\id{d}}=\prod_{|q|<|\alpha|\leq 1}\Theta_\alpha^{e_\alpha m_{\alpha}}
\end{equation}
where $e_\alpha$ is the degree of inseparability of $\alpha$ over $K$ and $\id{d}=\{m_\alpha\}$ is a $q$ periodic divisor and $\Theta_\id{d}$ satisfies the functional equation
\begin{equation}
\begin{aligned}
\Theta_\id{d}(X/q)&=(-X)^d\Theta_{\id{d}}(X)\\
\text{where}\qquad\qquad\qquad &\\
d=\sum_{|q|<|\alpha|\leq 1}{e_\alpha m_\alpha}&=\deg_q\id{d}\\
a=\prod_{|q|<|\alpha|\leq 1}\alpha^{e_\alpha m_\alpha}&\cong \Phi_q(\id{d}){\mod^\times }q^{p^{-\infty}}
\end{aligned}
\end{equation}

The rationality conditions ensure that we can write the above in the form below with prime denoting the conjugacy classes (with conjugate elements having the same absolute value).
\begin{equation}
\begin{aligned}\label{Roq12}
\deg_q\id{m}&={\sum'}_{|q|<|\alpha|\leq 1}[K(\alpha):K]m_\alpha\\
\Phi_q(\id{d})&\cong \prod'_{|q|<|\alpha|\leq 1} N_{K(\alpha)|K}(\alpha)^{m_\alpha}\mod^\times q^{p^{-\infty}}
\end{aligned}
\end{equation}

The perfectoid version of Corollary at \cite[pp 15]{roquette1970analytic}

\begin{corollary}\label{coro1}
For every $\alpha\in\tio{\alg{K}}$ there is a $q$ periodic function $f$ with $\ord_\alpha(f)=1$. If $\beta$ is not $K$ conjugate of $\alpha\mod^\times q^{p^{-\infty}}$, then we can choose $f$ such that $\ord_\beta(f)=0$.
\end{corollary}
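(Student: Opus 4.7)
The plan is to reduce the statement to the Perfectoid Abel--Jacobi Theorem (Proposition~\ref{prop1}) by constructing an explicit $q$-periodic divisor $\id{d}$ with $m_\alpha = 1$, $m_\beta = 0$, $\deg_q(\id{d}) = 0$, and $\Phi_q(\id{d}) \equiv 1 \mod^\times q^{p^{-\infty}}$. The function $f$ produced by that theorem then automatically satisfies $\ord_\alpha(f) = 1$ and $\ord_\beta(f) = 0$.

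After replacing $\alpha$ by a $q$-translate lying in the fundamental domain $|q| < |\alpha| \leq 1$, set $n = [K(\alpha):K]$ and $a = N_{K(\alpha)/K}(\alpha)$. The Galois-closed $q$-periodic divisor with $m_\alpha = 1$ on the conjugacy class of $\alpha$ contributes $n$ to the $q$-degree and $a$ to the Jacobi image via~\eqref{Roq12}. I would compensate by adding rational correction points $\gamma \in K^\times$ in the fundamental domain whose $q$-classes avoid $[\beta]$: pick $\gamma_1 = a \, q^{j} \in K^\times$ with $j \in \ds{Z}[1/p]$ chosen so that $\gamma_1$ lies in the domain, assign it coefficient $-1$, and add auxiliary rational points $\mu_\ell$ with coefficients in $\ds{Z}[1/p]$ summing to $-(n-1)$ whose multiplicative product is trivial modulo $q^{p^{-\infty}}$. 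A convenient choice is $\mu_1 = q^{1/p^N}$ with coefficient $-(n-1)$, since every $\ds{Z}[1/p]$-power of $q^{1/p^N}$ sits in $q^{p^{-\infty}}$. A short calculation through~\eqref{Roq12} then gives $\deg_q(\id{d}) = n - 1 - (n-1) = 0$ and $\Phi_q(\id{d}) \equiv 1 \mod^\times q^{p^{-\infty}}$, so Proposition~\ref{prop1} produces $f$.

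The \emph{main obstacle} is the second clause of the corollary: arranging that $\gamma_1$ and each $\mu_\ell$ sit in $q$-classes distinct from $[\beta]$. If $[\beta]$ happens to coincide with $[a]$ or with $[1]$ in $K^\times / q^{p^{-\infty}}$, the naive choices above would force $m_\beta \neq 0$. I would remedy this by splitting the offending correction across two classes --- writing $\gamma_1 = \gamma_1' \gamma_1''$ with $\gamma_1' \gamma_1'' \equiv a \mod^\times q^{p^{-\infty}}$ and neither factor in $[\beta]$, and similarly distributing the $\mu_\ell$ contribution. This splitting is always possible because $K^\times / q^{p^{-\infty}}$ is an infinite $p$-divisible abelian group (the perfectoid property supplies $p$-power roots, while infinitude of $K$ supplies many representatives in every class), so any finite set of forbidden classes can be avoided while preserving the required multiplicative relations. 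Once this splitting is in place, the divisor $\id{d}$ still satisfies the two Abel--Jacobi conditions, and the resulting $f$ has $\ord_\alpha(f) = 1$ and $\ord_\beta(f) = 0$ by construction.
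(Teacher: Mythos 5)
Your overall strategy is the same as the paper's: construct an explicit $q$-periodic divisor $\id{d}$ with $m_\alpha = 1$, verify $\deg_q(\id{d}) = 0$ and $\Phi_q(\id{d})\equiv 1$ via~\eqref{Roq12}, and invoke Proposition~\ref{prop1}. Your specific divisor $[\alpha] - [\gamma_1] - (n-1)[\mu_1]$ with $\gamma_1\equiv a = N_{K(\alpha)/K}(\alpha)$ and $\mu_1\equiv 1$ is precisely the paper's Case~2 divisor $\id{p}_\alpha - (d-1)\id{p}_u - \id{p}_v$ specialized to $u = 1$, $v = a$.

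The gap is that you never address the case $\alpha\in\tio{K}$, where the construction degenerates \emph{unconditionally}, not just in collision with $\beta$. If $\alpha\in\tio{K}$ then $n = [K(\alpha):K] = 1$ and $a = N_{K(\alpha)/K}(\alpha) = \alpha$, so your $\gamma_1 \equiv a$ lands in the same $q$-class as $\alpha$ and your $\mu_1$ enters with coefficient $-(n-1) = 0$; the divisor collapses to $[\alpha]-[\gamma_1] = 0$ and $\ord_\alpha(f) = 0$. Your ``main obstacle'' paragraph discusses only collisions with $[\beta]$ and never flags that the naive correction point \emph{always} coincides with $[\alpha]$ when $\alpha$ is rational. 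The paper handles this by a separate Case~1 divisor $\id{p}_\alpha + \id{p}_{uv} - \id{p}_{\alpha u} - \id{p}_v$, which uses two free parameters $u,v\in\tio{K}$ and a multiplicative cancellation in which no single correction point is forced to equal $[\alpha]$ or $[\beta]$.

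A secondary problem is that your ``splitting'' patch is underspecified in a way that cannot be glossed over. Replacing $\gamma_1$ (coefficient $-1$) by two points $\gamma_1',\gamma_1''$ with $\gamma_1'\gamma_1''\equiv a$ and ``neither factor in $[\beta]$'' leaves the coefficients on $\gamma_1',\gamma_1''$ unstated. Giving each coefficient $-1$ changes $\deg_q$ by an extra $-1$, which must be compensated elsewhere; giving them coefficient $-1/2$ leaves $\ds{Z}[1/p]$ unless $p = 2$; any other split must be balanced simultaneously against both the degree constraint and the multiplicative constraint on the norms in~\eqref{Roq12}, which is exactly the bookkeeping that the paper's choice $v = u^{1-d}a$ (Case~2) and the four-term alternating divisor (Case~1) are designed to make transparent. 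By fixing $\gamma_1\approx a$ and $\mu_1\approx 1$ up front you lose the free parameters $u,v$ that the paper uses to avoid $[\alpha]$, $[\beta]$, and each other, and the patch you sketch does not obviously recover that freedom without a more careful accounting than you give.
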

\begin{proof}
Following \eqref{Roq12} we construct divisor $\id{p}_\alpha$ for elements conjugate to $\alpha$
\begin{equation}
\deg_q\id{p}_\alpha=[K(\alpha):K],\qquad \Phi_q(\id{p}_\alpha)\equiv N_{K(\alpha)/K}(\alpha)\mod^\times q^{p^{-\infty}}
\end{equation}
with $\ord_\alpha(f)=1$ (for all $K$-conjugates of $\alpha$) and $\ord_\beta(f)=0$ by setting $m_\alpha=1$ (thus, there are no fractional powers that need to be considered in the proof).

There are two cases to consider either $\alpha\in\tio{K}$ or $\alpha\notin \tio{K}$, that is $\alpha\in\tio{\alg{K}}\bs\tio{K}$.\\
Case 1: For $\alpha\in \tio{K}$ choose elements $u,v\in\tio{K}$ such that $\alpha,uv,\alpha u, v$ are all different $\mod^\times q^{p^{-\infty}}$. The divisor $\id{d}$
\begin{equation}
\begin{aligned}
\id{d}&=\id{p}_\alpha+\id{p}_{uv}-\id{p}_{\alpha u}-\id{p}_v,\\
\deg\id{d}&=0\text{ and }\Phi_q(\id{d})\equiv\frac{\alpha uv}{\alpha uv}\equiv 1\mod^\times q^{p^{-\infty}}
\end{aligned}
\end{equation}
Thus, $\id{d}=\dv(f)$ a $q$ periodic function with $\ord_\alpha(f)=1$ and $u,v$ can be so chosen that $uv,\alpha u,v$ are all different from $\beta\mod^\times q^{p^{-\infty}}$ giving $\ord_\beta(f)=0$.\\
Case 2: Let $\alpha\notin \tio{K}$ and $d=[K(\alpha):K],a=N_{K(\alpha)|K}(\alpha), u\in\tio{K}, v=u^{1-d}a$, then the divisor
\begin{equation}
\begin{aligned}
\id{d}&=\id{p}_\alpha-(d-1)\id{p}_u-\id{p}_v\\
\deg{\id{d}}&=0\text{ and }\Phi_q\equiv \frac{a}{u^{d-1}v}\equiv 1\mod^\times q^{p^{-\infty}}.
\end{aligned}
\end{equation}
Thus $\dv f=\id{d}$ with a $f$ a $q$ periodic function and by construction $\id{d}$ has multiplicity one at $\alpha$, and an appropriate choice of $u,v\not\equiv\beta\mod^{\times}q^{p^{-\infty}}$ gives $\ord_\beta(f)=0$.
\end{proof}

\begin{mdef} The vector space of a $q$ periodic divisor is denoted as
\begin{equation}
\begin{aligned}
L_K(q|\id{d})&:=\{\text{$q$ periodic functions $f$ such that }\dv(f)\geq -\id{d}\}\\
\ell_k(q|\id{d})&:=\dim L_K(q|\id{d})
\end{aligned}
\end{equation}

\end{mdef}

\begin{rem}\label{rem1}
Proposition \ref{prop1} helps give the dimension
\begin{equation}
\ell_k(q|\id{d})
\begin{cases}
0 \text{ if }\deg_q{\id{d}}<0\\ 
1 \text{ if }\deg_q{\id{d}}=0\\ 
\end{cases}
\end{equation}

Note that in the perfectoid world $\deg\in\ds{Z}[1/p]$.

\end{rem}

In order to prove the Riemann-Roch Theorem in the perfectoid, the corollary \ref{coro1} has to be recast for a perfectoid power $1/p^i$ ($i,p$ fixed) in place of $1$.

\begin{corollary}\label{coro2}
For every $\alpha\in\tio{\alg{K}}$ and chosen $i,p$ there is a $q$ periodic function $f$ with $\ord_\alpha(f)=1/p^i$. If $\beta$ is not $K$ conjugate of $\alpha\mod^\times q^{p^{-\infty}}$, then we can choose $f$ such that $\ord_\beta(f)=0$.
\end{corollary}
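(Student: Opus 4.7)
The plan is to mimic the proof of Corollary \ref{coro1} verbatim, but rescale every divisor by the factor $1/p^i$. Since the perfectoid formalism allows divisor multiplicities in $\ds{Z}[1/p]$, assigning $m_\alpha = 1/p^i$ to a point is a legitimate operation, and all the divisor arithmetic used in the previous corollary goes through. Concretely, I would first define $\id{p}_\alpha$ as in Corollary \ref{coro1} but weighted by $1/p^i$, so that $\deg_q \id{p}_\alpha = [K(\alpha):K]/p^i$ and $\Phi_q(\id{p}_\alpha) \equiv N_{K(\alpha)/K}(\alpha)^{1/p^i} \mod^\times q^{p^{-\infty}}$.

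Next I would split into the same two cases. In Case 1, where $\alpha \in \tio{K}$, I would choose $u, v \in \tio{K}$ such that $\alpha, uv, \alpha u, v$ are pairwise distinct $\mod^\times q^{p^{-\infty}}$ and each distinct from $\beta$, and form
\begin{equation*}
\id{d} = \tfrac{1}{p^i}\bigl(\id{p}_\alpha + \id{p}_{uv} - \id{p}_{\alpha u} - \id{p}_v\bigr),
\end{equation*}
for which $\deg_q \id{d} = 0$ and $\Phi_q(\id{d}) \equiv \bigl(\alpha\cdot uv/(\alpha u \cdot v)\bigr)^{1/p^i} \equiv 1 \mod^\times q^{p^{-\infty}}$. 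In Case 2, where $\alpha \notin \tio{K}$, I would take $d = [K(\alpha):K]$, $a = N_{K(\alpha)/K}(\alpha)$, pick $u \in \tio{K}$, set $v = u^{1-d}a$, and form
\begin{equation*}
\id{d} = \tfrac{1}{p^i}\bigl(\id{p}_\alpha - (d-1)\id{p}_u - \id{p}_v\bigr),
\end{equation*}
with $\deg_q \id{d} = 0$ and $\Phi_q(\id{d}) \equiv \bigl(a/(u^{d-1} v)\bigr)^{1/p^i} \equiv 1 \mod^\times q^{p^{-\infty}}$.

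In both cases the Perfectoid-Abel-Jacobi Theorem (Proposition \ref{prop1}) produces a $q$-periodic function $f$ with $\dv(f) = \id{d}$. By construction the multiplicity of $\id{d}$ at $\alpha$ is $1/p^i$, so $\ord_\alpha(f) = 1/p^i$, and the freedom to pick $u, v$ away from the conjugacy class of $\beta$ forces $\ord_\beta(f) = 0$.

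The only point that demands care, and which I expect to be the main obstacle, is the assertion that raising the Jacobi image to the $1/p^i$-th power preserves triviality in $\tio{K}/q^{p^{-\infty}}$. This is not an obstruction but a verification: since we quotient by the group generated by $q^{1/p^k}$ for all $k$, the quotient is $p$-divisible and the operation $x \mapsto x^{1/p^i}$ is well-defined on classes, sending $1$ to $1$. Once that is noted, the rest is a mechanical rescaling of the integer-multiplicity proof.
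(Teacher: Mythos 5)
Your proposal follows the paper's proof essentially verbatim: absorb the factor $1/p^i$ into the building blocks $\id{p}_\alpha$ (so $\deg_q\id{p}_\alpha=[K(\alpha):K]/p^i$ and $\Phi_q(\id{p}_\alpha)\equiv N_{K(\alpha)/K}(\alpha)^{1/p^i}\mod^\times q^{p^{-\infty}}$), run the two cases of Corollary~\ref{coro1} unchanged, and invoke Proposition~\ref{prop1} to get the $q$-periodic $f$. Your closing paragraph, observing that $\tio{K}/q^{p^{-\infty}}$ is $p$-divisible so that $x\mapsto x^{1/p^i}$ is well-defined on classes and fixes the identity, is a sensible verification that the paper leaves implicit.

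One bookkeeping slip to fix: having already built the factor $1/p^i$ into $\id{p}_\alpha$, you should not also multiply the combination by an extra $\tfrac{1}{p^i}$. As written, $\id{d}=\tfrac{1}{p^i}\bigl(\id{p}_\alpha+\id{p}_{uv}-\id{p}_{\alpha u}-\id{p}_v\bigr)$ with your already-scaled $\id{p}$'s gives multiplicity $1/p^{2i}$ at $\alpha$, and the Jacobi-image exponent you then state, $\bigl(\alpha\, uv/(\alpha u\, v)\bigr)^{1/p^i}$, is inconsistent with your own displayed formula (it would be $1/p^{2i}$). In the paper, $\id{d}$ is the plain combination $\id{p}_\alpha+\id{p}_{uv}-\id{p}_{\alpha u}-\id{p}_v$ of the scaled $\id{p}$'s (and similarly $\id{p}_\alpha-(d-1)\id{p}_u-\id{p}_v$ in Case 2). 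Choose one convention --- scale each $\id{p}_\alpha$, or scale the final combination --- but not both; with that corrected your argument coincides with the paper's.
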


\begin{proof}
Following \ref{coro1} we construct divisor $\id{p}_\alpha$ for elements conjugate to $\alpha$
\begin{equation}
\deg_q\id{p}_\alpha=[K(\alpha):K]\cdot\frac{1}{p^i},\qquad \Phi_q(\id{p}_\alpha)\equiv N_{K(\alpha)/K}(\alpha)^{{1}/{p^i}}\mod^\times q^{p^{-\infty}}
\end{equation}
with $\ord_\alpha(f)={1}/{p^i}$ (for all $K$-conjugates of $\alpha$) and $\ord_\beta(f)=0$ by setting $m_\alpha={1}/{p^i}$ (thus, there are fractional powers that need to be considered in the proof). The divisor $\id{p}_\alpha$ has multiplicity $1/p^i$ at $\alpha$.

There are two cases to consider either $\alpha\in\tio{K}$ or $\alpha\notin \tio{K}$, that is $\alpha\in\tio{\alg{K}}\bs\tio{K}$.\\
Case 1: For $\alpha\in \tio{K}$ choose elements $u,v\in\tio{K}$ such that $\alpha,uv,\alpha u, v$ are all different $\mod^\times q^{p^{-\infty}}$. The divisor $\id{d}$
\begin{equation}
\begin{aligned}
\id{d}&=\id{p}_\alpha+\id{p}_{uv}-\id{p}_{\alpha u}-\id{p}_v,\\
\deg\id{d}&=0\text{ and }\Phi_q(\id{d})\equiv\frac{\alpha uv}{\alpha uv}\equiv 1\mod^\times q^{p^{-\infty}}
\end{aligned}
\end{equation}
Thus, $\id{d}=\dv(f)$ a $q$ periodic function with $\ord_\alpha(f)=1$ and $u,v$ can be so chosen that $uv,\alpha u,v$ are all different from $\beta\mod^\times q^{p^{-\infty}}$ giving $\ord_\beta(f)=0$.\\
Case 2: Let $\alpha\notin \tio{K}$ and $d=[K(\alpha):K],a=N_{K(\alpha)|K}(\alpha)^{1/p^i}, u\in\tio{K}, v=u^{1-d}a$, then the divisor
\begin{equation}
\begin{aligned}
\id{d}&=\id{p}_\alpha-(d-1)\id{p}_u-\id{p}_v\\
\deg{\id{d}}&=0\text{ and }\Phi_q\equiv \frac{a}{u^{d-1}v}\equiv 1\mod^\times q^{p^{-\infty}}.
\end{aligned}
\end{equation}
Thus $\dv f=\id{d}$ with a $f$ a $q$ periodic function and by construction $\id{d}$ has multiplicity $1/p^i$ at $\alpha$, and an appropriate choice of $u,v\not\equiv\beta\mod^{\times}q^{p^{-\infty}}$ gives $\ord_\beta(f)=0$.
\end{proof}
\subsection{Perfectoid Riemann-Roch}
Let a divisor be $1[x_1]+2[x_2]+3/p^i[x_3]$, this is rewritten as $1p^i/p^i[x_1]+2p^i/p^i[x_2]+3/p^i[x_3]$ and called a divisor with denominator $1/p^i$. The Riemann-Roch theorem gives the dimension $\ell_K(q|\id{d})$ as the degree ($\times p^i$) of the divisor. Setting $i=0$ gives the standard case as in \cite[Proposition 2, pp 16]{roquette1970analytic}.
\begin{theorem}\label{RR}
If $\id{d}$ is a $q$ periodic divisor with denominator $1/p^i$ (where $i,p$ are fixed) and $\deg_q(\id{d})>0$ then
\begin{equation}
\ell_K(q|\id{d})=\deg_q(\id{d})\cdot p^i
\end{equation}
\end{theorem}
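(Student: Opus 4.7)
The plan is to induct on the positive integer $n := p^i\deg_q(\id{d})$, building $\id{d}$ one perfectoid-atomic piece $(1/p^i)[\alpha]$ at a time. This tracks the classical argument of \cite[Proposition 2]{roquette1970analytic} verbatim, but with the elementary increment in degree shrunk from $1$ to $1/p^i$. The two principal tools are Corollary \ref{coro2}, which manufactures $q$-periodic functions with a prescribed fractional pole of order $1/p^i$ at a prescribed point, and Proposition \ref{prop1} (perfectoid Abel-Jacobi), which controls the divisors of $q$-periodic functions. Via the standard twist $L_K(q|\id{d}) \cong f_0 \cdot L_K(q|\id{d} + \dv(f_0))$ for any nonzero $f_0 \in L_K(q|\id{d})$, I may reduce throughout to the case where $\id{d}$ is effective, since this twist preserves both degree and dimension.

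For the base case $n = 1$, the effective divisor $\id{d}$ has degree $1/p^i$, so constants give $L_K(q|\id{d}) \supseteq K$ and $\ell \geq 1$. For the reverse inequality, a non-constant $f \in L_K(q|\id{d})$ would force $\dv(f)$ to have a single pole at some point $\beta$ in the support of $\id{d}$ of fractional order at most $1/p^i$, with effective part also of degree at most $1/p^i$; Proposition \ref{prop1} then pins the multiplicator to $1$, and in this minimal-degree regime the only consistent solution is a constant. Hence $\ell_K(q|\id{d}) = 1 = p^i \deg_q(\id{d})$.

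For the inductive step from $n$ to $n+1$, I decompose $\id{d}' = \id{d} + (1/p^i)[\alpha]$ with $\deg_q(\id{d}) = n/p^i$ and $\alpha$ in the support of $\id{d}'$. The inclusion $L_K(q|\id{d}) \hookrightarrow L_K(q|\id{d}')$ combined with the inductive hypothesis yields $\ell(\id{d}') \geq n$. Next, I introduce a fractional leading-coefficient map $L_K(q|\id{d}') \to K$ extracting the coefficient of $(X - \alpha)^{-m_\alpha}$ in the local perfectoid expansion of $f$ near $\alpha$, where $m_\alpha \in \ds{Z}[1/p]$ is the multiplicity of $\alpha$ in $\id{d}'$; its kernel is precisely $L_K(q|\id{d})$, so $\ell(\id{d}') \leq \ell(\id{d}) + 1 = n+1$. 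Finally, Corollary \ref{coro2} (suitably rearranged as in Case~1 or Case~2 of its proof, passing to reciprocals to convert a prescribed zero into a prescribed pole) produces a $q$-periodic element of $L_K(q|\id{d}') \setminus L_K(q|\id{d})$, so equality holds and $\ell(\id{d}') = n + 1$.

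The main obstacle is setting up the fractional leading-coefficient functional rigorously. Classically this would be a residue computation at a simple pole, but here the local ring at $\alpha$ is the perfectoid Tate algebra $K\lr{X - \alpha}_\infty$ whose Laurent-type expansions are indexed by $\ds{Z}[1/p]$ rather than $\ds{Z}$, so there is no canonical notion of a ``lowest'' term without extra structure. I would define the functional by running the perfectoid Weierstra\ss{} preparation theorem on $(X - \alpha)^{m_\alpha} f(X)$ and reading off the constant term of the resulting distinguished polynomial factor, using the uniqueness in the preparation theorem to verify that the functional is well-defined, $K$-linear, and has the claimed kernel. Once that formalism is in place, the two-sided bound closes the induction and the formula $\ell_K(q|\id{d}) = p^i \deg_q(\id{d})$ follows.
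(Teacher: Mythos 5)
Your outline follows the same skeleton as the paper's proof --- induct on the integer $n = p^i\deg_q(\id{d})$, realize the inductive step as a one-dimensional increment via a $K$-linear functional on $L_K(q|\id{d})$, and use Corollary~\ref{coro2} together with Proposition~\ref{prop1} to supply the required $q$-periodic functions --- but the functional you choose is different, and it is the harder choice. The paper picks a fresh point $b$ at which $\id{d}$ has multiplicity zero (with $b \not\equiv 1, a \mod^\times q^{p^{-\infty}}$ where $a \equiv \Phi_q(\id{d})$), forms $\id{d}' = \id{d} - \id{p}_b$, and uses the evaluation map $f \mapsto f(b)$. Because $b$ lies outside the support of $\id{d}$, evaluation is manifestly well defined and $K$-linear, no local expansion is required, and its kernel is visibly the subspace vanishing at $b$. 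You instead deepen an existing pole, writing $\id{d}' = \id{d} + (1/p^i)[\alpha]$ with $\alpha$ in the support, and try to extract the coefficient of $(X-\alpha)^{-m_\alpha}$. You are right to flag this as the crux: with orders living in $\ds{Z}[1/p]$ rather than $\ds{Z}$, a ``leading term'' has no canonical meaning, and manufacturing it from Weierstra{\ss} preparation is genuinely extra work that the paper simply sidesteps by choosing $b$ away from the support. I would adopt the paper's functional.

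Two smaller divergences are worth noting. Your opening reduction to effective divisors requires a nonzero $f_0 \in L_K(q|\id{d})$, which is close to what the theorem is being asked to establish; you can produce such an $f_0$ from Proposition~\ref{prop1} by constructing an effective $\id{z}$ with the right degree and Jacobi image, but as written the step looks circular. The paper never performs that reduction; it proves surjectivity of $f \mapsto f(b)$ directly by taking $\id{z} = \id{p}_a + (d-1)\id{p}_1$ and invoking Proposition~\ref{prop1} to get $f$ with $\dv f = \id{z} - \id{d}$ and $f(b) \neq 0$. Likewise your base case $n=1$ is argued informally; the paper instead arranges, via the choice of $b$, that $\id{d}'$ has degree zero and $\Phi_q(\id{d}') \not\equiv 1 \mod^\times q^{p^{-\infty}}$, so that $\ell_K(q|\id{d}') = 0$ falls out of Proposition~\ref{prop1} and anchors the induction cleanly.
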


\begin{proof}
For $\deg_q(\id{d})=0$ the result holds from Proposition \ref{prop1}, thus we may assume $d=\deg_q(\id{d})>0$ and use induction for the numerator of $d$. Start by choosing element $a\in\tio{K},a\equiv\Phi_q(\id{d})\mod^\times q^{p^{-\infty}}$, now choose $b\not\equiv 1, a\mod^\times q^{p^{-\infty}} $ such that is $\id{d}$ has multiplicity zero at $b$. The divisor $\id{p}_b$ as constructed in corollary \ref{coro2} has multiplicity $1/p^i$. Notice the divisor
\begin{equation}
\id{d}'=\id{d}-\id{p}_b\text{ where }\deg\id{d}'=(d-1)/p^i,\qquad \Phi_q(\id{d}')\equiv\frac{a}{b}\not\equiv 1 \mod^\times q^{-p^{-\infty}}
\end{equation}
By induction $\ell_K(q|\id{d}')=d-1$. Consider the map $f\ra f(b)$, the kernel of this map is the space $L_K(q|\id{d}')$ which has value zero at $b$. Since, we need one $d$ as dimension we need to show that there is an $f\in L_K(q|\id{d})$ and $f(b)\neq 0$. Let
\begin{equation}
\dv f=\id{z}-\id{d},\text{ hence }\id{z}\geq 0
\end{equation}
and $\id{z}$ has multiplicity zero at $b$. \ref{prop1} requires to show
\begin{equation}
\deg_q(\id{z})=d/p^i, \Phi_q(\id{z})\equiv a\mod^\times q^{p^{-\infty}}
\end{equation}
which is given by the divisor below
\begin{equation}
\id{z}=\id{p}_a+(d-1)\id{p}_1
\end{equation}
where $\id{p}_a$ and $\id{p}_1$ are as given in corollary \ref{coro2}(to transfer $1/p^i$) and $b\neq 1,a\mod^\times q^{-p^{-\infty}}$ as assumed above.
\end{proof}

\section{Perfectoid Tate Curve}

Define perfectoid Tate curve  as $\mathcal{T}:=\ds{G}_{m,K}/\lr{q}$ with $0<\abs{q}<1$, and $\lr{q}$ is the subgroup of $\tio{K}$ generated by $q$. This is precisely the same as the definition in \cite[pp 121]{fresnel2012rigid}. It is possible to define the above as $\mathcal{T}:=\ds{G}_{m,K}/\lr{q}$ with fractional powers for $q$, that is modulo out with subgroup generated by $q^{\ds{Z}[1/p]}$, but this will give us a different model. In the standard model \cite[pp 220]{bosch2014lectures} the gluing is induced via multiplication by $q$, if we go by the fractional power case we will have to consider gluing induced by multiplication by the vector $(q,q^{1/p},q^{1/p^2},\ldots,q^{1/p^i},\ldots)$, and define admissible sets according to each $q^{1/p^i}$. In order to simplify the situation we want to keep the same admissible sets as the standard Tate Curve but put a different sheaf on it. 

Let $\ds{B}(r_1,r_2)$ denote an annulus with inner radius $r_1$ and outer radius $r_2$, that is $|r_1|\leq |r_2|$. Following definition 1.3.1 \cite[pp6]{lutkebohmert2016rigid} the ring corresponding $\ds{B}(r_1,r_2)$ is $K\lr{X/r_2,r_1/X}$. We can replace  $\ds{B}(r_1,r_2)$ with  $\ds{B}(r_1^{1/p},r_2^{1/p})$ with corresponding ring as $K\lr{(X/r_2)^{1/p},(r_1/X)^{1/p}}$, and use direct image sheaf to transfer the ring $K\lr{(X/r_2)^{1/p},(r_1/X)^{1/p}}$ to the disk $\ds{B}(r_1,r_2)$.
Notice the inverse system below which is analogous to one given at \cite[pp16]{lutkebohmert2016rigid}.
\begin{equation}
\cdots\ra\ds{B}(r_1^{1/p^2},r_2^{1/p^2})\xra{(\cdot)^p}\ds{B}(r_1^{1/p},r_2^{1/p})\xra{(\cdot)^p}\ds{B}(r_1,r_2)
\end{equation}
which gives us a direct system (all maps are inclusion)
\begin{equation}
\cdots\leftarrow K\lr{(X/r_2)^{1/p^2},(r_1/X)^{1/p^2}}\leftarrow K\lr{(X/r_2)^{1/p},(r_1/X)^{1/p}}\leftarrow K\lr{X/r_2,r_1/X}
\end{equation}

Thus, we can construct inverse and direct limit of the systems above and call them perfectoid versions denoting them by $\ds{B}_\infty$ and the corresponding ring as $K\lr{(X/r_2)^{1/p^\infty},(r_1/X)^{1/p^\infty}}$. 
\[K\lr{X/r_2,r_1/X}_\infty:=K\lr{(X/r_2)^{1/p^\infty},(r_1/X)^{1/p^\infty}}=\cup_{i\geq 0}K\lr{(X/r_2)^{1/p^i},(r_1/X)^{1/p^i}}\]

We will not worry much about limits, instead we directly associate the ring $K\lr{X/r_2,r_1/X}_\infty$ to the disk $\ds{B}(r_1,r_2)$.

We will follow chapter $5$ of \cite{fresnel2012rigid} and define the open sets as $U_0=\ds{B}(q,q^{-1}),U_1=\ds{B}(q^2,q),U_{0,1,+}=\ds{B}(q,q),U_{0,1,-}=\ds{B}(q^2,q^2),U_{0+}=\ds{B}(q^{-1},q^{-1}) $. 

The corresponding rings are given below (with coefficients tending to zero as $n\ra \infty$). Notice that we are closely following \cite[pp 122]{fresnel2012rigid} replacing $z$ with $X$ and $\pi$ with $q$, and explicitly writing the constants (and of course $n\in\ds{Z}[1/p]$ instead of usual $\ds{Z}$).

\begin{landscape}
{\begin{equation}\label{eq:Tate1}
\begin{aligned}
\curly{O}(U_0)&=\left\{K\lr{qX,q/X}_\infty\text{ or }\sum_{n> 0}a_n(q X)^n+b_0+\sum_{n>0}b_n\left(\frac{q}{X}\right)^n\text{ with }\lim a_n=0,\lim b_n=0 \right\}\\
\curly{O}(U_1)&=\left\{K\lr{X/q,q^2/X}_\infty\text{ or }\sum_{n> 0}c_n\left(\frac{X}{q}\right)^n+c_0+\sum_{n>0}d_n\left(\frac{q^2}{X}\right)^n \text{ with }\lim c_n=0,\lim d_n=0\right\}\\
\curly{O}(U_{0,1,+})&=\left\{K\lr{X/q,q/X}_\infty\text{ or }\sum_{n> 0}e'_n\left(\frac{X}{q}\right)^n+e_0+\sum_{n>  0}e''_n\left(\frac{q}{X}\right)^n\text{ or }\sum_{n\in\ds{Z}[1/p]}e_n\left(\frac{X}{q}\right)^n\text{ with }\lim e_n=0\right\}\\
\curly{O}(U_{0,1,-})&=\left\{K\lr{X/q^2,q^2/X}_\infty\text{ or }\sum_{n> 0}f'_n\left(\frac{X}{q^2}\right)^n+f_0+\sum_{n>  0}f''_n\left(\frac{q^2}{X}\right)^n\text{ or }\sum_{n\in\ds{Z}[1/p]}f_n\left(\frac{X}{q^2}\right)^n\text{ with }\lim f_n=0\right\}\\
\curly{O}(U_{0+})&=\left\{K\lr{qX,1/qX}_\infty\text{ or }\sum_{n> 0}g'_n\left(qX\right)^n+g_0+\sum_{n>  0}g''_n\left(\frac{1}{qX}\right)^n\text{ or }\sum_{n\in\ds{Z}[1/p]}g_n\left(qX\right)^n
\text{ with }\lim g_n=0\right\}\\
\curly{O}(U_{0,1})&=\curly{O}(U_{0,1,+})\oplus\curly{O}(U_{0,1,-})
\end{aligned}
\end{equation}
}
\end{landscape}
\subsection{Gluing the Sets }
The inner boundary of $U_0$ is $U_{0,1+}$ and outer boundary is $U_{0+}$, and the outer boundary of $U_1$ is $U_{0,1+}$ and inner boundary is $U_{0,1,-}$. We can identify $U_{0,1,-}=\ds{B}(q^2,q^2)$ to $U_{0+}=\ds{B}(q^{-1},q^{-1})$ by multiplying with $1/q^3$. In terms of ring map $\curly{O}(U_{0+})\ra\curly{O}(U_{0,1,-})$ the mapping is $X\mapsto X/q^3$( or $qX\mapsto X/q^2$). 
 
 \begin{equation}
 \ds{B}(q^2,q^2)=U_{0,1,-}\xra{1/q^3}U_{0+}=\ds{B}(q^{-1},q^{-1})
 \end{equation}

The above gluing is necessary for identification of rings in the Cech complex. 

The mapping from $\curly{O}(U_1)$ to $\curly{O}(U_{0,1,+})\oplus \curly{O}(U_{0,1,-})$ carries terms with coefficient $c_n$ to $e'_n$ and $d_n$ to $f''_n$, we rewrite this as $(c_n,d_n)\mapsto(e'_n,f''_n) $. Similarly, we have the mapping $\curly{O}(U_0)$ to $\curly{O}(U_{0,1,+})\oplus \curly{O}(U_{0+})=\curly{O}(U_{0,1,-})$ where $(a_n,b_n)\mapsto (g'_n,e''_n)\mapsto (f'_n,e''_n)$. 

\begin{figure}[H]
\centering
 \begin{tikzpicture}\label{check8}
 []
        \matrix (m) [
            matrix of math nodes,
            row sep=0.5em,
            column sep=7.5em,
                   ]
{   |[name=aa]|\curly{O}(U_0) &  |[name=ab]|\curly{O}(U_{0,1,+}) \\
 |[name=ka]| \oplus &|[name=kb]| \oplus \\
 |[name=qa]| \curly{O}(U_1) &  |[name=qb]|\curly{O}(U_{0,1,-}) \\
 |[name=wa]| \sum_{n> 0}a_n(q X)^n &  |[name=wb]|\sum_{n> 0}e'_n\left(\dfrac{X}{q}\right)^n \\
  |[name=ea]| +b_0  &  |[name=eb]|+e_0 \\
   |[name=ra]|+\sum_{n>0}b_n\left(\dfrac{q}{X}\right)^n &  |[name=rb]|+\sum_{n>  0}e''_n\left(\dfrac{q}{X}\right)^n \\
   \oplus & \oplus \\
  |[name=ta]| \sum_{n> 0}c_n\left(\dfrac{X}{q}\right)^n &  |[name=tb]|\sum_{n> 0}f'_n\left(\dfrac{X}{q^2}\right)^n \\
  |[name=ya]| +c_0 &  |[name=yb]|+f_0 \\
   |[name=ua]|+ \sum_{n>0}d_n\left(\dfrac{q^2}{X}\right)^n &  |[name=ub]|+\sum_{n>  0}f''_n\left(\dfrac{q^2}{X}\right)^n \\ 
        };
 \path[overlay,->, font=\scriptsize,>=latex]
        (wa) edge [out=355,in=175,looseness=1] (tb)
         (ra) edge (rb)
         (ua) edge (ub)

;

\path[overlay,->,color=gray, font=\scriptsize,>=latex]
         (ya) edge [out=355,in=195,looseness=1.5] (eb)
         (ea) edge (eb)
         ;
         
         \path[overlay,->,color=blue, font=\scriptsize,>=latex]
         (ta) edge [out=355,in=175,looseness=1] (wb)
         (ua) edge (ub)
         ;
\end{tikzpicture} 
\caption{Restriction Maps: Sets restricted to their boundary}\label{check8}  
  \end{figure}

The \v{C}ech complex is given as 
\begin{equation}
\begin{aligned}
\curly{O}(U_0)\oplus \curly{O}(U_1)&\xra{d} \curly{O}(U_{0,1,+})\oplus \curly{O}(U_{0,1,-}) \xra{d_1} 0\\
(a_n,b_n)\oplus (c_n,d_n)&\xra{d}(b_n-c_n,a_n-d_n)\\
~~&~~~~~(e''_n-e'_n,f'_n-f''_n)\\
(K,0)\oplus (K,0)&\xra{d}(0,0)
\end{aligned}
\end{equation}
Notice that for $b_0=c_0=$ constant, the $\kr d=0$. Hence we get the global sections $H^0(\curly{O}_\mathcal{T},\mathcal{T}_p)=K$. 
\begin{equation}
\begin{aligned}
\curly{O}(U_0)\oplus \curly{O}(U_1)\xra{d} \curly{O}(U_{0,1,+})\oplus \curly{O}(U_{0,1,-}) &\xra{d_1} 0\\
(e'_n+e''_n\oplus f'_n+f''_n)&\xra{d_1}(0,0)
\end{aligned}
\end{equation}
But, the above terms can be lifted to $(b_n,-c_n,a_n,-d_n)$ or $(a_n,b_n)\oplus(-c_n,-d_n)$. We now consider the constant terms $(e_0,f_0)$, where $e_0$ can be lifted to $(b_0-c_0)$ but $f_0$ cannot be lifted. Thus, we get $\dim_KH^1(\curly{O}_\mathcal{T},\mathcal{T}_p)=1$.

\begin{theorem}
The cohomology groups for perfectoid Tate Curve are given as
\begin{equation}
H^i(\curly{O}_{\mathcal{T}_p},\mathcal{T}_p)=
\begin{cases}
K & \text{~for~} i=0,1\\
0 & \text{~for~} i\geq 2
\end{cases}
\end{equation}
\end{theorem}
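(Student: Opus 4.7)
The plan is to read the cohomology directly from the two-term \v{C}ech complex associated to the cover $\mathcal{U} = \{U_0, U_1\}$ of $\mathcal{T}_p$ that was built in the previous subsection. Since this cover has only two elements, the \v{C}ech complex has the shape
\begin{equation}
\curly{O}(U_0)\oplus \curly{O}(U_1)\xra{d_0} \curly{O}(U_{0,1,+})\oplus \curly{O}(U_{0,1,-}) \xra{d_1} 0,
\end{equation}
and vanishing of $H^i$ for $i\geq 2$ is automatic from the absence of higher terms. So the whole content of the theorem is to verify that $\mathcal{U}$ is a Leray cover for $\curly{O}_{\mathcal{T}_p}$ (so that \v{C}ech computes sheaf cohomology) and to identify $\ker d_0$ and $\ckr d_0$ explicitly.

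First I would observe that each $U_i$ and each component of the intersection is a perfectoid annulus, so acyclicity of $\curly{O}$ on these pieces reduces to the standard Tate acyclicity statement applied to the rings $K\lr{\cdot}_\infty$ from \eqref{eq:Tate1}; the perfectoid version of this was the key input flagged in the introduction (it is proved in \cite{Bedi2018} by reducing to the classical Tate algebra via the exhaustion $K\lr{\cdot}_\infty = \cup_i K\lr{\cdot^{1/p^i}}$). With the cover known to be Leray, sheaf cohomology equals the cohomology of the above complex.

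Next I would compute $H^0 = \ker d_0$ term by term on the expansions in \eqref{eq:Tate1}. Using the restriction maps summarized in Figure \ref{check8} and the identification $U_{0,1,-}\cong U_{0+}$ by multiplication by $1/q^3$, a pair $(f_0,f_1) \in \curly{O}(U_0)\oplus\curly{O}(U_1)$ lies in the kernel iff $a_n = d_n$, $b_n = c_n$ for $n > 0$ and the constant terms satisfy $b_0 = c_0$. The first set of conditions forces the positive and negative tails to vanish (because of the conflicting decay conditions $|a_n|\to 0$ and $|d_n|\to 0$ on the corresponding opposite pieces, and similarly for $b_n,c_n$), so only a common constant $b_0 = c_0 \in K$ survives; this is precisely the $K$ in degree zero.

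Finally I would compute $H^1 = \ckr d_0$. For the nonconstant terms, every pair $((e'_n,e''_n),(f'_n,f''_n))$ with $n > 0$ admits the explicit lift $(b_n,-c_n,a_n,-d_n)$ described after the second \v{C}ech display, so the quotient is supported entirely on the constant components. There the constant $e_0$ can be written as the difference of two constants coming from $U_0$ and $U_1$, but $f_0$ (attached to $U_{0,1,-}$) has no preimage because the gluing identifies $U_{0,1,-}$ with the \emph{outer} boundary $U_{0+}$ and its constant contribution is unmatched; this gives a single surviving $K$-factor, i.e.\ $\dim_K H^1 = 1$. The only subtle point is making the term-by-term matching of coefficients rigorous in the perfectoid setting, where exponents live in $\ds{Z}[1/p]$ rather than $\ds{Z}$; I expect this to be the main obstacle, and it is handled by running the argument on each graded piece indexed by $n\in\ds{Z}[1/p]$ separately and invoking density of the exhaustion to pass to the limit.
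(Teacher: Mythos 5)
Your proposal follows the same overall route as the paper: both read the cohomology off the two--term \v{C}ech complex for the cover $\{U_0,U_1\}$, conclude $H^{\geq 2}=0$ from the absence of higher degrees, identify $H^0$ with the constants, and show the cokernel is one--dimensional by lifting nonconstant coefficients and observing the one unmatched constant. You also correctly flag the Leray/Tate acyclicity issue for the perfectoid rings $K\lr{\cdot}_\infty$, which the paper passes over silently; that is a worthwhile addition.

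However, your justification of $H^0=K$ has a genuine gap. You state that the kernel conditions $a_n=d_n$, $b_n=c_n$ ($n>0$) ``force the positive and negative tails to vanish because of the conflicting decay conditions $|a_n|\to 0$ and $|d_n|\to 0$.'' There is no conflict: if $a_n=d_n$ and both sequences tend to zero, the conditions are perfectly compatible, and without further input the kernel of the \emph{untwisted} differential would be huge, not $K$. What actually kills the nonconstant coefficients is the twist by powers of $q$ in the restriction/gluing maps. Tracking the normalizations from \eqref{eq:Tate1} together with the identification of $U_{0,1,-}=\ds{B}(q^2,q^2)$ with $U_{0+}=\ds{B}(q^{-1},q^{-1})$ via $X\mapsto X/q^3$, the two intersection conditions become (up to the bookkeeping of which circle carries which normalization) something of the form $a_nq^{2n}=c_n$ on $U_{0,1,+}$ and $a_n=c_nq^n$ on $U_{0,1,-}$, whence $a_n=q^{3n}a_n$; since $0<|q|<1$ and $n>0$, this forces $a_n=0$, and similarly $b_n=c_n=d_n=0$. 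Only the shared constant survives, giving $H^0=K$. Your computation of $H^1$ is essentially correct and matches the paper's (the nonconstant part of the cokernel vanishes for the same reason, now read as surjectivity after solving for $a_n,c_n$ using that $q^{3n}-1$ is a unit, and the constants contribute the single surviving $K$), but the $H^0$ step needs the $q$--twist, not a decay argument.
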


For some computations we want $d$ to be surjective, which can be accomplished by multiplying $\curly{O}(U_1)$ by $(X-1)$ which introduces the constant term $d_{-1}$ (in which we absorb $-c_0$). This makes the map $d$ onto in the following \v{C}ech complex.
\begin{equation}\label{surjComp}
\begin{aligned}
\curly{O}(U_0)\oplus (X-1)\curly{O}(U_1)\xra{d} \curly{O}(U_{0,1,+})\oplus \curly{O}(U_{0,1,-}) &\xra{d_1} 0\\
(K,0)\oplus (0,K)&\xra{d}(K,K)
\end{aligned}
\end{equation}
In place of $(X-1)$ we can also use a factor $X^n-1$ where $n\in\ds{Z}[1/p]$ and work with constant term $d_{-n}$.

\section{Divisors on $\mathcal{T}_p$} A divisor $D$ on $\mathcal{T}$ is a finite formal finite sum  of the form
\begin{equation}
D=\sum_{i=1}^sn_i[x_i]\text{ where }n_i\in\ds{Z}[1/p], x_i\in\mathcal{T}\text{ and }\deg D=\sum_in_i
\end{equation}
A positive divisor has all $n_i\geq 0$ and is denoted by $D\geq 0$. The sheaf of meromorphic functions($\curly{M}(U_i)$) are defined as the ring of quotients of $\curly{O}(U_i)$. A divisor of a non zero meromorphic function $f$ is defined as 
\begin{equation}
\dv(f)=\sum_{x\in\mathcal{T}}\ord_xf[x]
\end{equation}

Notice that a function $f$ will only have a finite number of zeros and thus the $\dv(f)$ will make sense and give a finite sum. Also, notice that $\curly{O}(U_0)$ is not a PID, but a Bezout domain. There exists a holomorphic function $h_0$ on $U_0$ such that the divisor of $h_0$ on $U_0$ is $D$. This is possible because we can first work for the Tate case and get a function $h_0$ as described on \cite[pp 124]{fresnel2012rigid} and then replace integer powers with desired fractional powers. We define the sheaf of divisors as 
\begin{equation}
\curly{L}(D)(U)=\{f\in\curly{M}(U)\text{ such that } \dv(f)\geq -D|_U\}
\end{equation} 

For a positive divisor $D$ we have a SES with $\curly{Q}$ a coherent sheaf with finite support (skyscraper sheaf).
\begin{equation}
0\ra\curly{O}_\mathcal{T}\ra\curly{L}(D)\ra\curly{Q}\ra 0
\end{equation}

We know that $H^i(\mathcal{T},\curly{Q})=0$ for $i\geq 1$ and $H^i(\mathcal{T},\curly{O}_{\mathcal{T}_p})=0$ for $i\geq 2$. We get that $H^i(\mathcal{T}, \curly{L}(D))=0$ for $i\geq 2$. Considering Euler Characters for the SES of sheaves we get
\begin{align}
\chi(\curly{L}(D))=\chi(\curly{O}_\mathcal{T})+\chi(\curly{Q})
\end{align}
Since, $\chi(\curly{O}_\mathcal{T})=0$ (the zero and one dim are both $1$ and cancel out) we get 
$\chi(\curly{L}(D))=\chi(\curly{Q})$.

We have proved the following

\begin{theorem}\label{thm5.1.2.1}
For any perfectoid divisor on $\mathcal{T}$ we have the following
\begin{enumerate}
\item $H^i(\mathcal{T}, \curly{L}(D))=0$ for $i\geq 2$
\item $\dim H^0(\mathcal{T}, \curly{L}(D))-\dim H^1(\mathcal{T}, \curly{L}(D))=\dim H^0(\mathcal{T},\curly{Q})$
\end{enumerate}
\end{theorem}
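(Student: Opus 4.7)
The plan is to feed the short exact sequence
$0 \to \curly{O}_\mathcal{T} \to \curly{L}(D) \to \curly{Q} \to 0$
set up just above into the long exact sequence of sheaf cohomology on $\mathcal{T}$. Two inputs are needed first: the cohomology of $\curly{O}_\mathcal{T}$, delivered by the previous \v{C}ech computation as $H^0 = H^1 = K$ and $H^i = 0$ for $i \geq 2$; and the cohomology of $\curly{Q}$. For the latter I would argue that $\curly{Q}$ is supported on the finite set $\{x_1,\ldots,x_s\}$, so it splits as a direct sum of skyscraper sheaves $\curly{Q}_{x_i}$. Refining the two-element cover used for $\curly{O}_\mathcal{T}$ so that each $x_i$ lies in exactly one piece and in none of the pairwise overlaps, the \v{C}ech complex of $\curly{Q}$ collapses to its degree-zero piece $\bigoplus_i H^0(\mathcal{T}, \curly{Q}_{x_i})$, so $H^j(\mathcal{T}, \curly{Q}) = 0$ for $j \geq 1$.

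For part (1), the stretch of the long exact sequence relevant for $i \geq 2$ reads
$$H^i(\mathcal{T}, \curly{O}_\mathcal{T}) \longrightarrow H^i(\mathcal{T}, \curly{L}(D)) \longrightarrow H^i(\mathcal{T}, \curly{Q}),$$
and both flanking terms vanish by the two inputs, forcing $H^i(\mathcal{T}, \curly{L}(D)) = 0$. For part (2), because higher cohomology of all three sheaves vanishes past degree one, each Euler characteristic is an honest alternating sum of finite-dimensional $K$-vector spaces, and additivity along the short exact sequence yields
$$\chi(\curly{L}(D)) = \chi(\curly{O}_\mathcal{T}) + \chi(\curly{Q}).$$
The \v{C}ech computation gives $\chi(\curly{O}_\mathcal{T}) = 1 - 1 = 0$, and the skyscraper property forces $\chi(\curly{Q}) = \dim H^0(\mathcal{T}, \curly{Q})$, so rearranging reads off the stated identity.

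The main obstacle, and the only place where perfectoid-specific care really enters, is verifying that $\curly{Q}$ genuinely qualifies as a finite-support coherent sheaf with $\dim H^0(\mathcal{T}, \curly{Q})$ finite, because divisor multiplicities now live in $\ds{Z}[1/p]$ rather than $\ds{Z}$. Concretely, at each support point $x_i$ I would need the stalk $\curly{L}(D)_{x_i}/\curly{O}_{\mathcal{T},x_i}$ to be a finite-dimensional $K$-vector space whose dimension reflects the denominator $1/p^i$ appearing in $D$, which is precisely the content of Theorem \ref{RR}. Weierstra\ss{} preparation lets me change variables so that a fractional-order divisor locally looks like an ordinary divisor of integer degree $n p^i$, reducing the stalk count to a routine computation. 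A final bookkeeping step reduces an arbitrary divisor to the positive case by writing $D = D_1 - D_2$ with $D_j \geq 0$ and applying the argument to each piece.
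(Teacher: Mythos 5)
Your proposal follows the same path the paper takes: feed the short exact sequence $0\to\curly{O}_\mathcal{T}\to\curly{L}(D)\to\curly{Q}\to 0$ into the long exact sequence, use the previously computed $H^0(\curly{O}_\mathcal{T})=H^1(\curly{O}_\mathcal{T})=K$ with $H^{\geq 2}=0$ together with the skyscraper vanishing $H^{\geq 1}(\curly{Q})=0$ to get part (1), and invoke Euler-characteristic additivity with $\chi(\curly{O}_\mathcal{T})=0$ for part (2). You flesh out details the paper compresses into a single paragraph (the refined-cover \v{C}ech collapse for $\curly{Q}$ and the finiteness of $\dim H^0(\curly{Q})$ via Theorem \ref{RR}), but the underlying argument is the same; note only that the concluding $D=D_1-D_2$ remark cannot literally be "applied to each piece" since $\curly{Q}$ is only defined for $D\geq 0$, though the paper itself quietly restricts to the positive case when writing the short exact sequence.
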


The Perfectoid Riemann Roch theorem \ref{RR} gives $\dim H^0(\mathcal{T},\curly{Q})=\deg D\cdot p^i$ for $i,p$ fixed in the divisor $D$. This should be thought of as a simple change of variable $X^{1/p^i}\mapsto X$. 
\begin{theorem}\label{thm5.1.2.2}
For any perfectoid divisor on $\mathcal{T}$, $H^1(\mathcal{T}, \curly{L}(D))=0$ for $\deg D>0$. 
\end{theorem}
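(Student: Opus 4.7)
The plan is to deduce vanishing of $H^1$ purely from Theorem \ref{thm5.1.2.1} combined with the perfectoid Riemann--Roch statement already proved as Theorem \ref{RR}. Since Theorem \ref{thm5.1.2.1} gives the Euler characteristic identity
\begin{equation*}
\dim H^0(\mathcal{T}, \curly{L}(D)) - \dim H^1(\mathcal{T}, \curly{L}(D)) = \dim H^0(\mathcal{T}, \curly{Q}),
\end{equation*}
and the remark after Theorem \ref{thm5.1.2.1} identifies the right-hand side with $\deg D \cdot p^i$, the whole argument reduces to showing that the space of global sections also has dimension exactly $\deg D \cdot p^i$.

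First I would make the identification $H^0(\mathcal{T}, \curly{L}(D)) = L_K(q \mid D)$. This is where the quotient presentation $\mathcal{T} = \ds{G}_{m,K}/\lr{q}$ is used: a global section of $\curly{L}(D)$ on $\mathcal{T}$ pulls back to a meromorphic function on $\ds{G}_{m,K}$ which is invariant under $X \mapsto qX$ and whose divisor dominates $-\til{D}$, where $\til{D}$ is the $q$-periodic divisor on $\ds{G}_{m,K}$ lifting $D$. This is exactly the definition of $L_K(q \mid D)$ given in the Abel--Jacobi section, so the identification is tautological once one checks that the poles and periodicity conditions match. The denominator $1/p^i$ of $D$ carries over to the denominator of $\til{D}$ without change.

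Next I would apply the perfectoid Riemann--Roch Theorem \ref{RR} to $\til{D}$: since $\deg_q \til{D} = \deg D > 0$ and $\til{D}$ has the same denominator $1/p^i$ as $D$, we obtain
\begin{equation*}
\dim H^0(\mathcal{T}, \curly{L}(D)) = \ell_K(q \mid \til{D}) = \deg_q(\til{D}) \cdot p^i = \deg D \cdot p^i.
\end{equation*}
Substituting this and $\dim H^0(\mathcal{T}, \curly{Q}) = \deg D \cdot p^i$ into the Euler characteristic equation of Theorem \ref{thm5.1.2.1} yields $\dim H^1(\mathcal{T}, \curly{L}(D)) = 0$, as desired.

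The main obstacle is really the bookkeeping of the first step, i.e.\ making sure that the perfectoid global sections on $\mathcal{T}$ (defined via the perfectoid structure sheaf built from the $K\lr{\cdot}_\infty$-rings on the annular cover $U_0, U_1$) genuinely coincide with the $q$-periodic functions in $L_K(q \mid D)$ with fractional-power divisors. Once that matching is set up, the rest is formal: the theorem drops out of the Riemann--Roch equality for $\ell_K$ combined with the Euler characteristic in Theorem \ref{thm5.1.2.1}. No further work is needed to handle the fractional denominators, because both sides pick up the same factor $p^i$.
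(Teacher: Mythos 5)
Your proof is correct in outline but takes a genuinely different route from the paper. The paper proves $H^1(\mathcal{T}, \curly{L}(D))=0$ by a direct geometric reduction: it first replaces $D$ by a linearly equivalent divisor via an isomorphism $\curly{L}(D)\cong\curly{L}(\tilde{D})$, then peels off a copy of $\curly{L}(\tfrac{1}{p^i}[t])$ using a skyscraper short exact sequence, translates $t$ to $e$ by the automorphism $\times a$ of $\mathcal{T}$, and finally observes in the modified \v{C}ech complex \eqref{surjComp} (with the factor $X-1$) that the differential $d$ is surjective, whence $H^j=0$ for $j\geq 1$. You instead compute $\dim H^0(\mathcal{T},\curly{L}(D))$ a priori by identifying it with the Abel--Jacobi space $L_K(q\mid\til{D})$ and invoking the perfectoid Riemann--Roch Theorem \ref{RR}, then feed this together with the assertion $\dim H^0(\mathcal{T},\curly{Q})=\deg D\cdot p^i$ into the Euler characteristic identity of Theorem \ref{thm5.1.2.1} to force $\dim H^1=0$. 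Your argument is more uniform (no reduction to the basic divisor $\tfrac{1}{p^i}[e]$, no translation step) and cleanly separates the line-bundle cohomology from the explicit cover, but it hinges on the identification $H^0(\mathcal{T},\curly{L}(D))\cong L_K(q\mid\til{D})$, which the paper never explicitly establishes. You are right to flag it as the real content: it is the standard dictionary between sheaves on the Tate quotient $\ds{G}_{m,K}/\lr{q}$ and $q$-periodic meromorphic functions, but in the perfectoid setting with the $K\lr{\cdot}_\infty$-rings on $U_0,U_1$ and the gluing along $U_{0,1,\pm}$, this would require unwinding the \v{C}ech description to check that a compatible pair on $U_0,U_1$ really assembles to a $q$-periodic function with the prescribed pole bounds; neither approach gets this for free, but the paper's \v{C}ech computation sidesteps the need to state it as a separate lemma.
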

The two theorems give $\dim H^0(\mathcal{T}, \curly{L}(D))=\deg D\cdot p^i$, which can be used to construct perfectoid elliptic curves. The proof closely follows \cite[pp 125]{fresnel2012rigid}.
\begin{proof}
From theorem \ref{thm5.1.2.1}, there is a non zero meromorphic function $f$ such that $\dv f\geq -D$ (for $D>0$), and $f$ provides isomorphism between line bundles $\curly{L}(D)$ and $\curly{L}(\tilde{D})$ where $\tilde{D}=D+\dv f$. Hence, it suffices to work with any $\deg D>0$. Let $D\geq 1/p^i[t]$ for any $t\in \mathcal{T}$ and consider the exact sequence
\begin{equation}
0\ra\curly{L}(1/p^i[t])\ra\curly{L}(D)\ra\curly{Q}\ra 0
\end{equation}
with $\curly{Q}$ a skyscraper sheaf. Hence, it suffices to consider case $D=1/p^i[t]$. Let $a\in\ds{G}_{m,K}$ be the corresponding element to $t\in\curly{T}$, and $e$ correspond to $1$. Since, $\times a$ induces isomorphism on $\mathcal{T}$, there is an isomorphism between complexes $\curly{L}(1/p^i[e])$ and $\curly{L}(1/p^i[t])$, hence consider the case $t=e$, that is we can now work with factor $(X-1)$. Now, consider the \v{C}ech complex \eqref{surjComp} where it is shown that $d$ is surjective giving $H^j(\curly{T},\curly{L}(1/p^i[e]))=0$ for $j\geq 1$.

\end{proof}

\subsection{Perfectoid Elliptic Curves}
The above can be used to construct space of the form $\curly{L}(n[e])$ and come up with an elliptic curve(s) (with $\lambda_1\neq 0$) as given in \cite[pp 126]{fresnel2012rigid} or in \cite[pp 59]{silverman2013arithmetic}. 
\begin{equation}\label{ellipticCurve}
\begin{aligned}
Y^2+\lambda_1X^3+\lambda_2XY+\lambda_3X^2+\lambda_4Y+\lambda_5X+\lambda_6&=0\text{ with }i=0\\
Y^{2/p}+\lambda_1X^{3/p}+\lambda_2X^{1/p}Y^{1/p}+\lambda_3X^{2/p}+\lambda_4Y^{1/p}+\lambda_5X^{1/p}+\lambda_6&=0\text{ with }i=1\\
Y^{2/p^2}+\lambda_1X^{3/p^2}+\lambda_2X^{1/p^2}Y^{1/p^2}+\lambda_3X^{2/p^2}+\lambda_4Y^{1/p^2}+\lambda_5X^{1/p^2}+\lambda_6&=0\text{ with }i=2\\
\vdots\hspace{50mm}&=\vdots\qquad\qquad\vdots
\end{aligned}
\end{equation}

 \section{Theta Function} The basic Theta function is described in \eqref{funtheta} or \cite[pp 128]{fresnel2012rigid} is adapted to the perfectoid case by setting $n\in\ds{Z}[1/p]$
 \begin{equation}
 \Theta(X)=\prod_{n\geq 0}\left(1-\frac{q^n}{X}\right)\prod_{n>0}(1-q^n X)
 \end{equation}
The corresponding divisor is $\sum_{n\in\ds{Z}[1/p]}[q^n]$ with functional equation $\Theta(X/q)=-X\Theta(X)$. In particular for a divisor $D=\sum_in_i[x_i]$ with $n_i\in\ds{Z}[1/p]$, one defines $\Theta_D=\prod_i\Theta^{n_i}_{x_i}$ with divisor as $\sum_{i,n\in\ds{Z}[1/p]}n_i[q^nx_i]$. This can be used to prove that the following sequence (corresponding to Proposition 5.1.7 \cite[pp 128]{fresnel2012rigid}) is exact for $K$ perfectoid
\begin{equation}
0\ra \tio{K}\ra\curly{M}(\mathcal{T})\xra{\text{div}}\mathrm{Div}(\mathcal{T})\ra\ds{Z}[1/p]\times \mathcal{T}\ra 0
\end{equation}
 
\section{Weierstra{\ss} Equations}

The Weierstra{\ss} series is given as
\begin{equation}
\begin{aligned}
\wp(X)&=\sum_{n\in\ds{Z}[1/p]}\frac{q^nX}{(1-q^nX)^2}\\
\frac{d}{dX}\frac{q^nX}{(1-q^nX)^2}&=\frac{q^n+q^{2n}X}{(1-q^nX)^3}\\
\wp'(X)&=\frac{1}{2}\left( X\frac{d}{dX}\wp(X)-\wp(X)\right)=\sum_{n\in\ds{Z}[1/p]}\frac{q^{2n}X^2}{(1-q^nX)^3}
\end{aligned}
\end{equation}

The bijective map $\ds{Z}[1/p]\ra\ds{Z}[1/p]$ given by $n\mapsto n-1$ shows that $\wp(q^{-1}X)=\wp(X)$ and $\wp'(q^{-1}X)=\wp'(X)$. The only problem is that the above series might not even converge for $n\in\ds{Z}[1/p]$, although it converges for $n\in\ds{Z}$. Hence, the equation of the form \eqref{Roq24} does not hold much meaning.

\begin{equation}\label{Roq24}
\wp'^2+\wp\wp'=\wp^3+A\wp^2+B\wp+C\qquad A,B,C\in K
\end{equation}

But, we can still interpret the above as the sum of all elliptic curves as given in \eqref{ellipticCurve}, where the correspondence is given by choosing $q^{1/p^i}$. For $i=0$ in $\wp$ we get the standard curve, for $i=1$ or $q^{1/p}$ we get the curve with powers $1/p$. For $i=2$ we get the curve for $i=2$ or $q^{1/p^2}$ and so on.

Further work can be carried out by extending results in \cite[Chapter V, \S3-4]{silverman2013advanced} to the perfectoid case by setting $q\in\ds{Z}[1/p].$

\bibliographystyle{apalike}

\bibliography{\myreferences}

\end{document}